\newtheorem{prop}{Proposition}
\newcommand{\Remark}{{\it Remark: }}
\newcommand{\Aside}{{\it Aside: }}
\newcommand{\EndAside}{{\it EndAside. }\smallskip}
\newcommand{\Claim}{{\it Claim: }}
\newcommand{\Diag}{\mathrm{Diag}}
\newcommand{\Tridiag}{\mathrm{Tridiag}}
\newcommand{\subdiag}{\mathrm{subdiag}}
\newcommand{\diag}{\mathrm{diag}}
\newcommand{\supdiag}{\mathrm{supdiag}}
\newcommand{\drew}{Drew, et al(2000)}
\newcommand{\elsner}{Elsner, et al(2003)}
\newcommand{\kkma}{Kailath, Kung and Morf(1979a)}
\newcommand{\kkmb}{Kailath, Kung and Morf(1979b)}
\newcommand{\ks}{Kailath and Sayed(1999)}
\newcommand{\rayleigh}{Rayleigh(1894)}
\newcommand{\trench}{Trench(2004)}
\newcommand{\weaver}{Weaver(1985)}
\begin{document}

%\newpage
%\input{10.title.tex}

\title
[Special Near-Toeplitz Matrices]
{The Eigen-Problem for Some Special \\
Near-Toeplitz Centro-Skew \\
Tridiagonal Matrices}

\keywords{Tridiagonal, Toeplitz, eigenvalue, eigenvector, centro symmetric, centro skew symmetric, sign pattern}

\subjclass{Primary: 15B05 Toeplitz, Cauchy, and related matrices; Secondary: 15B35 Sign pattern matrices; 15A18 Eigenvalues, singular values, and eigenvectors}

\date{\today}      
\maketitle

\centerline{by}

\centerline{Antonio Behn}
\centerline{Department of Mathematics}
\centerline{University of Chile}
\bigskip

\centerline{Kenneth R. Driessel}
\centerline{Mathematics Department}
\centerline{Iowa State University}
\bigskip

\centerline{Irvin R. Hentzel}
\centerline{Mathematics Department}
\centerline{Iowa State University}

\begin{abstract}
Let $n\ge 2$ be an integer. Let $R_n$ denote the $n\times n$ tridiagonal matrix with $-1$'s on the sub-diagonal, $1$'s on the super-diagonal, $-1$ in the $(1,1)$ entry, $1$ in the $(n,n)$ entry and zeros elsewhere. We find the eigen-pairs of the matrices $R_n$.
\end{abstract}  

\dedicatory{}

%\bigskip

\section*{Table of contents}

\begin{itemize}
\item
Introduction
\item
A Reduction
\item
Tridiagonal Toeplitz Matrices
\item
The Eigenvalues
\item
Acknowledgements
\item
References
\end{itemize}

\newpage

\section*{Introduction} 

We consider some special n-by-n, near-Toeplitz, tridiagonal matrices with entries from the set $\{0,1,-1\}$. In particular, we consider tridiagonal matrices having the following form:
$$
R_n := \Tridiag (\subdiag,\diag,\supdiag)
$$
where
\begin{itemize}
\item
$\subdiag:=(-1,-1,\dots,-1)$,
\item
$\diag:=(-1,0,0,\dots,0,0,1)$, and
\item
$\supdiag:=(1,1,\dots,1,1)$.
\end{itemize}
In other words, $R_n$ is the tridiagonal matrix that 
has all $-1$'s on the subdiagonal, all 0's on the diagonal except for a $-1$ in the (1,1) entry and a 1 in 
the (n,n) entry, and has all 1's on the superdiagonal. For example, when
$n:=4$ we have:
$$
R_4:=
\begin{pmatrix}
-1 &1  &0  &0  \\
-1 &0  &1  &0  \\
0  &-1 &0  &1  \\
0  &0  &-1 &1  \\

\end{pmatrix}.
$$
We find the eigenvalue-vector pairs (or {\bf eigen-pairs} for short) of the matrices $R_n$. In particular, we prove the following result:

\begin{prop}
The eigenvalues of the matrix $R_n$ are 0 and 
$2i\cos(j\theta)$
for $j=1,\dots,n-1$ 
where $\theta:=\pi/n$. 
\end{prop}

\Remark If $n$ is even then, 0 is an eigenvalue with multiplicity 2.

Let $\mathcal{T}_n$ denote the set of $n\times n$ tridiagonal real matrices that satisfy the following conditions: the sub-diagonal is negative, the super-diagonal is positive, the $(1,1)$ entry is negative, the $(n,n)$ entry is positive and all other entries are zero. \drew\  conjectured that this sign pattern class contains matrices with arbitrary spectra. Note that the matrix $R_n$ is in this class. They provided evidence for the conjecture. (\elsner\ provided further evidence.)
We believe that understanding the properties of $R_n$ may be an important step toward understanding the arbitrary spectrum conjecture for the sign pattern class $\mathcal{T}_n$.   

Here is a summary of the contents. In the section with title ``A Reduction'', we show that $R_n$ is similar to a near skew-symmetric, tridiagonal, Toeplitz matrix. In the section with title, ``Tridiagonal Toeplitz Matrices'' we review the solution of the eigen-problem for such matrices; in particular, we determine the eigen-pairs for a skew-symmetric tridiagonal Toeplitz matrix. In the section with title ``The Eigenvalues'', we determine the eigenvalues of $R_n$; in other words, we prove the proposition given above. 

\Aside Let $E$ be the $n\times n$ matrix defined by 
$E(i,j):= \delta(i+j,n+1)$, for $1\leq i,j \leq n$, where $\delta$ is the Kronecker delta. This matrix is called the {\bf exchange matrix} or the {\bf flip matrix}. 

Let $P$ be an $n\times n$ matrix. Then $P$ is a {\bf centro-symmetric matrix} if $EPE=P$ and $P$ is a {\bf centro-skew matrix} if $EPE=-P$. 

Here is an older reference on centro-symmetry: \weaver. Here is a more recent reference: \trench. 
These papers contain further references. We shall not use the theory of centro-symmetric matrices in this paper. 
\EndAside

%\newpage
%\input{40.reduction.tex}

\section*{A reduction} 

Recall that we are considering special matrices $R_n$ where $R_n$ is the tridiagonal matrix which 
that has all $-1$'s on the subdiagonal, has all 0's on the diagonal except for a $-1$ in the (1,1) entry and a 1 in the (n,n) entry, and has all 1's on the superdiagonal. We want to find the eigenvalues and eigenvectors of these matrices.

Here is another description of $R_n$. Let $Z_n$ denote the {\bf lower shift matrix} which is the matrix that has 1's on the subdiagonal and 0's elsewhere. For example, when $n=4$, we have
$$
Z_4 := 
\begin{pmatrix}
0 & 0 & 0 & 0  \\
1 & 0 & 0 & 0  \\
0 & 1 & 0 & 0  \\
0 & 0 & 1 & 0 
\end{pmatrix}.
$$

\Aside
Here we follow notation and terminology used by T. Kailath. See, for example, \kkma, \kkmb\  or \ks.
\EndAside

Note that 
$$
R_n = Z_n^T - Z_n - e_1 e_1^T + e_n e_n^T
$$
where $e_k$ denotes the column vector which has 1 in the $k$th coordinate and 0's elsewhere. 

\begin{prop}{\bf Reduction.}
Let $S_n:=I_n+Z_n$ where $I_n$ is the $n\times n$ identity matrix. Then
$$
S_n^{-1} R_n S_n = K_n + e_n e_{n-1}^T
$$
where $K_n:= Z_n^T - Z_n$.
\end{prop}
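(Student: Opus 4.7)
\medskip

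\noindent\textbf{Proof proposal.} The plan is to avoid inverting $S_n$ explicitly and instead verify the equivalent identity
\[
R_n S_n \;=\; S_n\bigl(K_n + e_n e_{n-1}^{T}\bigr).
\]
This suffices because $Z_n$ is nilpotent ($Z_n^{n}=0$), so $S_n=I_n+Z_n$ is invertible with inverse $\sum_{k=0}^{n-1}(-1)^{k}Z_n^{k}$; however, the inverse itself is never needed in the calculation.

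First I would record a short list of elementary rank-one identities for the lower shift matrix, all immediate from $Z_n e_k = e_{k+1}$ (with $e_{n+1}:=0$):
\[
Z_n^{T}Z_n = I_n - e_n e_n^{T}, \quad Z_n Z_n^{T} = I_n - e_1 e_1^{T},
\]
\[
e_1^{T}Z_n = 0, \quad e_n^{T}Z_n = e_{n-1}^{T}, \quad Z_n e_n = 0.
\]
These are exactly the ingredients needed to handle the two ``boundary corrections'' that distinguish $R_n$ from the pure skew part $K_n$.

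Next, using the decomposition $R_n = Z_n^{T} - Z_n - e_1 e_1^{T} + e_n e_n^{T}$ from the preceding paragraph, I would expand $R_n S_n = R_n + R_n Z_n$ term-by-term. The identity $Z_n^{T}Z_n = I_n - e_n e_n^{T}$ is what manufactures a bulk $I_n$ on the left-hand side and simultaneously cancels the $+e_n e_n^{T}$ sitting in $R_n$; the remaining rank-one pieces either vanish, by $e_1^{T}Z_n = 0$, or condense, by $e_n^{T}Z_n = e_{n-1}^{T}$, into exactly one surviving $e_n e_{n-1}^{T}$. The analogous expansion of the right-hand side, $(I_n+Z_n)(Z_n^{T}-Z_n+e_n e_{n-1}^{T})$, produces its bulk $I_n$ from $Z_n Z_n^{T}=I_n-e_1 e_1^{T}$ (which also supplies the $-e_1 e_1^{T}$), while $Z_n e_n = 0$ kills the only cross term $Z_n e_n e_{n-1}^{T}$. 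Both sides should then reduce to the common expression
\[
I_n + Z_n^{T} - Z_n - Z_n^{2} - e_1 e_1^{T} + e_n e_{n-1}^{T},
\]
completing the proof.

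There is no deep obstacle here; the main thing to watch is careful bookkeeping of the two rank-one corrections $e_1 e_1^{T}$ and $e_n e_n^{T}$. The essential structural point is that the same symmetric decomposition of $I_n$ into $Z_n^{T}Z_n + e_n e_n^{T}$ and into $Z_n Z_n^{T} + e_1 e_1^{T}$ supplies the $I_n$ on each side, and the mismatched ``tail'' corrections $e_n e_n^{T}$ on the left and $e_1 e_1^{T}$ on the right are precisely what $R_n$ and $e_n e_{n-1}^{T}$, respectively, were designed to absorb.
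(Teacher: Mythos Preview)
Your proposal is correct and follows essentially the same route as the paper: both reduce to the equivalent identity $R_nS_n=S_n(K_n+e_ne_{n-1}^{T})$ and verify it using the shift-matrix identities $Z_n^{T}Z_n=I_n-e_ne_n^{T}$, $Z_nZ_n^{T}=I_n-e_1e_1^{T}$, $e_1^{T}Z_n=0$, $e_n^{T}Z_n=e_{n-1}^{T}$, $Z_ne_n=0$. The only cosmetic difference is that the paper organizes the computation around the commutator $[K_n,S_n]=[Z_n^{T},Z_n]=e_1e_1^{T}-e_ne_n^{T}$ rather than expanding both sides to the common expression you wrote down, but the underlying calculation is the same.
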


\Remark Here is a picture of $S_4$:
$$
S_4:=
\begin{pmatrix}
1 &0 &0 &0 \\
1 &1 &0 &0 \\
0 &1 &1 &0 \\
0 &0 &1 &1
\end{pmatrix}
$$

Note that $I_n+Z_n$ is invertible since $Z_n$ is nilpotent. In fact
$$
(I_n+Z_n)^{-1} = I_n - Z_n + Z_n^2 - \cdots \pm Z_n^{n-1}.
$$

When $n=4$, for example, the conclusion of the proposition is
\begin{align*}
\begin{pmatrix}
 1 &  0 &  0 & 0 \\
-1 &  1 &  0 & 0 \\
 1 & -1 &  1 & 0 \\
-1 &  1 & -1 & 1 \\
\end{pmatrix}
&\begin{pmatrix}
-1 &  1 &  0 & 0  \\
-1 &  0 &  1 & 0  \\
0  & -1 &  0 & 1  \\
0  &  0 & -1 & 1  \\
\end{pmatrix}
\begin{pmatrix}
1 & 0 & 0 & 0  \\
1 & 1 & 0 & 0  \\
0 & 1 & 1 & 0  \\
0 & 0 & 1 & 1  \\
\end{pmatrix} \\
&=
\begin{pmatrix}
 0 &  1 & 0 & 0 \\
-1 &  0 & 1 & 0  \\
 0 & -1 & 0 & 1  \\
 0 &  0 & 0 & 0  \\
\end{pmatrix}.
\end{align*}

\begin{proof}
In order to reduce the number of symbols in this proof, we omit the subscript $n$ on $I$, $Z$, $R$ and $S$.

Note that the equation in the conclusion of the proposition is equivalent to 
\begin{equation}
RS = SK + Se_n e_{n-1}^T \tag{$*$}.
\end{equation}

Also note
\begin{align*}
RS &= (K - e_1 e_1^T + e_n e_n^T) S \\
&= KS + (e_n e_n^T - e_1 e_1^T)S \\
&= SK + [K,S] + (e_n e_n^T - e_1 e_1^T)S
\end{align*}
where $[X,Y]:= XY - YX$.

Hence, to prove $(*)$, we need only prove the following

\Claim
$ [K,S] = Se_n e_{n-1}^T + (e_1 e_1^T - e_n e_n^T)S$

We have 
\begin{align*}
[K,S] &= [K,I+Z] = [K,Z] = [Z^T-Z,Z] = [Z^T,Z] \\
&= Z^T Z - ZZ^T = \Diag(1,1,\dots,1,0) - \Diag(0,1,\dots,1,1) \\
&= e_1 e_1^T - e_n e_n^T.
\end{align*}

We also have
\begin{align*}
Se_n e_{n-1}^T &+ (e_1 e_1^T - e_n e_n^T)S \\
&= 
(I+Z)e_n e_{n-1}^T + (e_1 e_1^T - e_n e_n^T)(I+Z) \\
&=
e_1 e_1^T - e_n e_n^T.
\end{align*}
\end{proof}

%\newpage
%\input{50.toeplitz.tex}

\section*{Tridiagonal Toeplitz matrices} 
In this section, we consider the skew symmetric tridiagonal matrices
$K_n := Z_n^T - Z_n$.
We determine the eigenvalues and eigenvectors of these matrices.  

Let $T_n(a,b,c,):=\Tridiag(\subdiag, \diag, \supdiag)$
denote the $n\times n$ tridiagonal matrix 
determined by
$\subdiag:=(a,a,\dots,a,a)$,
$\diag:=(b,b,\dots,b,b)$, and
$\supdiag:=(c,c,\dots,c,c)$.
In other words, $T_n(a,b,c)$ is the tridiagonal matrix that has all $a$'s on the subdiagonal, has all $b$'s on the diagonal and has all $c$'s on the superdiagonal. Recall that matrices with constant diagonals are called {\bf Toeplitz} matrices. For example, when
$n:=4$ we have:
$$
T_4(a,b,c) :=
\begin{pmatrix}
b &c  &0  &0  \\
a &b  &c  &0  \\
0 &a  &b  &c  \\
0 &0  &a  &b  \\
\end{pmatrix}.
$$
We want to find the eigenvalues and eigenvectors of the matrices $T_n(a,b,c)$. 

The following result is well-known. 

\begin{prop}
The matrix $T_n(a,b,c)$ is diagonally similar to the symmetric matrix $T_n(\sqrt{ac},b,\sqrt{ac})$ provided $ac \neq 0$. In particular, 
$D T_n(a,b,c) D^{-1}= T_n(\sqrt{ac},b,\sqrt{ac})$
where $D:= \Diag(1,d,d^2,\dots,d^n)$ and
$d:=\sqrt{c/a}$. 
Furthermore,  $T_n(a,b,c)u=\lambda u$ iff
$T_n(\sqrt{ac},b,\sqrt{ac})Du = \lambda Du$ and hence the eigen-pairs of $T_n(a,b,c)$ are determined by the eigen-pairs of $T_n(\sqrt{ac},b,\sqrt{ac})$.
\end{prop}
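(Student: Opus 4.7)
The plan is to verify the claimed identity $D T_n(a,b,c) D^{-1} = T_n(\sqrt{ac},b,\sqrt{ac})$ by a direct entrywise computation, and then to deduce the eigen-pair correspondence from the definition of similarity. Since $D$ is diagonal with entries $D_{kk} = d^{k-1}$, conjugation by $D$ preserves the tridiagonal pattern (it sends the $(i,j)$ entry to $d^{i-j}$ times itself), so only the values on the three non-trivial diagonals need to be checked.

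First I would record the three cases. On the main diagonal, $(DTD^{-1})_{ii} = D_{ii} \cdot b \cdot D_{ii}^{-1} = b$, so the diagonal is unchanged. On the superdiagonal, $(DTD^{-1})_{i,i+1} = d^{i-1} \cdot c \cdot d^{-i} = c/d$. Substituting $d = \sqrt{c/a}$ gives $c/d = c\sqrt{a/c} = \sqrt{ac}$. On the subdiagonal, $(DTD^{-1})_{i+1,i} = d^{i} \cdot a \cdot d^{-(i-1)} = a d = a\sqrt{c/a} = \sqrt{ac}$. All other entries remain zero. This establishes the matrix identity (and implicitly uses that $ac \neq 0$ so that $d$ is well-defined and nonzero, and hence $D$ is invertible).

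Next I would handle the eigen-pair correspondence. Given $T_n(a,b,c) u = \lambda u$, multiply on the left by $D$ to obtain $D T_n(a,b,c) D^{-1} (Du) = \lambda (Du)$, which by the identity just proved is $T_n(\sqrt{ac},b,\sqrt{ac}) (Du) = \lambda (Du)$. The converse direction is identical, multiplying by $D^{-1}$. Since $D$ is invertible, the map $u \mapsto Du$ is a bijection on eigenspaces, so the eigen-pairs of $T_n(a,b,c)$ are in one-to-one correspondence with those of $T_n(\sqrt{ac},b,\sqrt{ac})$.

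There is no real obstacle here; the proof is essentially a one-line verification once the structure of diagonal conjugation is noted. The only subtle point worth flagging is the choice of square root: $d=\sqrt{c/a}$ and $\sqrt{ac}$ should be chosen consistently so that $cd^{-1}=\sqrt{ac}=ad$, which is automatic for any consistent choice of branch and in particular is unproblematic when $a$ and $c$ have the same sign (so the similarity is real). Otherwise the similarity is valid over $\mathbb{C}$.
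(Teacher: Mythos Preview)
Your proposal is correct and follows exactly the same approach as the paper: a direct entrywise verification that diagonal conjugation scales the $(i,j)$ entry by $d^{i-j}$, yielding $c/d=\sqrt{ac}$ on the superdiagonal and $ad=\sqrt{ac}$ on the subdiagonal, with the eigen-pair correspondence read off from similarity. The paper in fact presents only the $3\times3$ instance of this computation as an illustrative example rather than writing out the general case, so your argument is, if anything, slightly more complete.
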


We call the transformation 
$$
T_n(a,b,c)\mapsto D T_n(a,b,c) D^{-1}=
T_n(\sqrt{ac},b,\sqrt{ac})
$$
the {\bf diagonal similarity symmetrizing transformation}.

Here is an example:
\begin{align*}
&\begin{pmatrix}
 1 & 0 & 0 \\
 0 & d & 0 \\
 0 & 0 & d^2 \\
\end{pmatrix}
\begin{pmatrix}
b & c & 0   \\
a & b & c   \\
0 & a & b  \\
\end{pmatrix}
\begin{pmatrix}
1 & 0      & 0   \\
0 & d^{-1} & 0   \\
0 & 0      & d^{-2} \\
\end{pmatrix} \\
&=
\begin{pmatrix}
 b &  c/d & 0   \\
da &  b   & c/d \\
 0 & da   & b   \\
\end{pmatrix}
=
\begin{pmatrix}
 b        &  \sqrt{ac} & 0         \\
\sqrt{ac} &  b         & \sqrt{ac} \\
 0        & \sqrt{ac}  & b         \\
\end{pmatrix}.
\end{align*}

The following result is well-known. (See, for example, \rayleigh.) We include a simple proof for the reader's convenience. 

\begin{prop} {\bf Symmetric tridiagonal Toeplitz eigen-pairs.} 
The eigenvalues $\lambda_j$ and corresponding eigenvectors $u_j$ of the $n\times n$ matrix
$T_n(a,b,a)$ are, for $j=1,\dots,n$, given by
$\lambda_j := b + 2a \cos(j\theta)$ and
$u_j := (\sin(j\theta), \sin(2j\theta), \cdots,
\sin(nj\theta))^T $, where $\theta:=\pi/(n+1)$.
\end{prop}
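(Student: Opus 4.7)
The plan is to verify the claim by direct substitution into the eigenvalue equation, treating it as a second-order linear recurrence with homogeneous boundary conditions. Writing $T_n(a,b,a) u = \lambda u$ component-wise gives the three-term recurrence
$$ a u_{k-1} + b u_k + a u_{k+1} = \lambda u_k \quad (k=1,\ldots,n), $$
with the convention $u_0 := 0$ and $u_{n+1} := 0$ (these ``phantom'' entries let the two endpoint equations of the tridiagonal system be written in the same form as the interior ones). So the task reduces to exhibiting, for each $j$, solutions of this recurrence that also vanish at $k=0$ and $k=n+1$.

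First I would check that the candidate $u_{j,k} := \sin(kj\theta)$ satisfies the boundary conditions. At $k=0$ this is automatic; at $k=n+1$ we have $\sin((n+1)j\theta) = \sin(j\pi) = 0$, thanks to the choice $\theta = \pi/(n+1)$. Note also that $u_j$ is nonzero, since its first component $\sin(j\theta)$ is nonzero for $0 < j\theta < \pi$.

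Next I would plug $u_{j,k} = \sin(kj\theta)$ into the recurrence and apply the sum-to-product identity
$$ \sin((k-1)\phi) + \sin((k+1)\phi) = 2 \cos(\phi)\sin(k\phi) $$
with $\phi = j\theta$. The left-hand side of the recurrence collapses to
$$ b \sin(kj\theta) + 2a\cos(j\theta)\sin(kj\theta) = \bigl(b + 2a\cos(j\theta)\bigr)\sin(kj\theta) = \lambda_j u_{j,k}, $$
confirming that $(\lambda_j, u_j)$ is an eigen-pair.

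Finally, to see that we have the \emph{full} spectrum, I would observe that $j \mapsto \cos(j\theta)$ is strictly decreasing on $j = 1,\ldots,n$ since $j\theta$ ranges over $n$ distinct points in $(0,\pi)$; hence the $\lambda_j$ are $n$ distinct eigenvalues of an $n\times n$ matrix, and the corresponding $u_j$ form a complete set of linearly independent eigenvectors. There is no real obstacle in this proof; the only step requiring any thought is recognizing that the right sum-to-product identity is exactly what converts the tridiagonal action into scalar multiplication, and that the denominator $n+1$ in $\theta$ is forced by the boundary condition at $k = n+1$.
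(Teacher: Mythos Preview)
Your proof is correct and follows essentially the same approach as the paper's: both verify the eigen-pairs directly via the three-term sine identity $\sin((k-1)\phi)+\sin((k+1)\phi)=2\cos\phi\,\sin(k\phi)$. The only presentational differences are that the paper first reduces to the normalized case $T_n(1,0,1)$ (via $A\mapsto A-bI$ and $A\mapsto a^{-1}A$) before checking, whereas you work with general $(a,b,a)$ directly, make the phantom boundary conditions $u_0=u_{n+1}=0$ explicit, and add a completeness argument that the paper omits.
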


\begin{proof}
Recall that matrices $A$ and $bI+A$ have the same eigenvectors. Also recall that $\lambda$ is an eigenvalue of $A$ iff $b+\lambda$ is an eigenvalue of $bI+A$. 

Recall that, for $a\neq0$, the matrices $A$ and $aA$ have the same eigenvectors. Also recall that $\lambda$ is an eigenvalue of $A$ iff $a\lambda$ is an eigenvlue of $aA$.

Using these reductions, we see that we only need to verify the eigen-pairs of $T_n(1,0,1)$. To reduce the number of symbols in this calculation we take $n=3$.
In this case, we have $\theta:=\pi/4$ and
\begin{align*}
T_3(1,0,1)u_j &=
\begin{pmatrix}
0 & 1 & 0 \\
1 & 0 & 1 \\
0 & 1 & 0 \\
\end{pmatrix}
\begin{pmatrix}
 \sin(j\theta) \\
 \sin(2j\theta)\\
 \sin(3j\theta) \\
\end{pmatrix}
=
\begin{pmatrix}
\sin(0j\theta)+\sin(2j\theta) \\
\sin(1j\theta)+\sin(3j\theta) \\
\sin(2j\theta)+\sin(4j\theta) \\
\end{pmatrix} \\
&= 2\cos(j\theta)
\begin{pmatrix}
\sin(j\theta)  \\
\sin(2j\theta)  \\
\sin(3j\theta)  \\
\end{pmatrix}.
\end{align*} 
The last equality follows from the well-known 3 term recurrence relation for the sine function:
$$
\sin(k+1)\alpha - 2\cos\alpha\sin(k\alpha)
+\sin(k-1)\alpha=0.
$$
\end{proof}

\begin{prop} {\bf Skew-symmetric tridiagonal Toeplitz eigen-pairs.} 
The eigenvalues $\lambda_j$ and corresponding eigenvectors $u_j$ of the $n\times n$ matrix
$K_n:=Z_n^T-Z_n$ are, for $j=1,\dots,n$, given by
$\lambda_j := 2i \cos(j\theta)$ and
$
u_j:=(i\sin(j\theta),
i^2\sin(2j\theta),
i^3\sin(3j\theta),
\cdots,
i^n\sin(nj\theta))
$, where $\theta:=\pi/(n+1)$.
\end{prop}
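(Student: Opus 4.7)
The plan is to deduce the result as a corollary of the two preceding propositions: the diagonal-similarity symmetrizing transformation and the spectral formula for the symmetric tridiagonal Toeplitz matrix. Since $K_n = T_n(-1,0,1)$, we have $ac = -1 \neq 0$; taking $d := -i$ (so $d^2 = c/a = -1$) and $D := \Diag(1,d,d^2,\dots,d^{n-1})$, the diagonal-similarity proposition yields $DK_nD^{-1} = T_n(i,0,i) = i\,T_n(1,0,1)$.

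By the symmetric Toeplitz eigen-pair proposition, $T_n(1,0,1)$ has eigen-pairs $(2\cos(j\theta),\, v_j)$ for $j = 1,\dots,n$, where $v_j := (\sin(j\theta), \sin(2j\theta), \dots, \sin(nj\theta))^T$ and $\theta := \pi/(n+1)$. Multiplying the matrix by the scalar $i$ scales the eigenvalues by $i$ and preserves the eigenvectors, so $T_n(i,0,i)$ has eigen-pairs $(2i\cos(j\theta),\, v_j)$. Pulling back through the similarity, $D^{-1}v_j$ is an eigenvector of $K_n$ with eigenvalue $\lambda_j = 2i\cos(j\theta)$; since $d^{-1} = i$, its $k$-th entry is $i^{k-1}\sin(kj\theta)$, which becomes the printed $i^k\sin(kj\theta)$ after rescaling by the scalar $i$.

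Alternatively, and in the same spirit as the previous proof, one can verify the eigen-pair by direct substitution. For $1 < k < n$, $(K_n u_j)_k = u_{j,k+1} - u_{j,k-1} = i^{k-1}(i^2\sin((k+1)j\theta) - \sin((k-1)j\theta)) = -2i^{k-1}\cos(j\theta)\sin(kj\theta)$ by the sum-to-product identity; this equals $\lambda_j u_{j,k} = 2i^{k+1}\cos(j\theta)\sin(kj\theta)$ because $i^{k+1} = -i^{k-1}$. The top boundary $k = 1$ reduces to the double-angle identity, while the bottom boundary $k = n$ requires $\sin((n+1)j\theta) = \sin(j\pi) = 0$, which is precisely the point at which the specific value $\theta = \pi/(n+1)$ enters.

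The only real obstacle is bookkeeping: one must choose the branch $d = -i$ (not $+i$) and supply a global scalar factor $i$ so that $D^{-1}v_j$ matches the exact $u_j$ in the statement, rather than the sign-shifted companion one gets with the opposite branch. Since eigenvectors are determined only up to a nonzero scalar, this is a cosmetic issue that the direct-substitution route sidesteps entirely.
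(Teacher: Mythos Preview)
Your proposal is correct and your primary argument---reduce $K_n=T_n(-1,0,1)$ to $iT_n(1,0,1)$ via the diagonal similarity with $d=-i$, then import the symmetric eigen-pairs---is exactly the paper's proof. Your extra care about the branch $d=-i$ versus $d=+i$ and the global scalar $i$ needed to land on the stated $u_j$ (rather than $i^{k-1}\sin(kj\theta)$) is a point the paper leaves implicit, and your alternative direct-substitution check is not in the paper but is a welcome sanity check in the spirit of its proof of the symmetric case.
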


\begin{proof}
Note that $K_n=Z_n^T-Z_n=T_n(-1,0,1)$. We apply the diagonal similarity symmetrizing transformation with $D^{-1}:=\Diag(1,i,i^2,\dots,i^n)$ to get
$$ 
DT_n(-1,0,1)D^{-1}= T_n(i,0,i) = iT_n(1,0,1).
$$
We know the eigen-pairs of $T_n(1,0,1)$ from above. Thus we can easily determine the eigen-pairs of $K_n$. In particular, if $T_n(1,0,1)u = \lambda u$ then
$D T_n(-1,0,1) D^{-1} u = i T_n(1,0,1)
= i\lambda u$. Hence
$T_n(-1,0,-1)(D^{-1}u) = i\lambda D^{-1}u$.
\end{proof}

%\newpage
%\input{60.eigenvalues.tex}

\section*{The Eigenvalues} 

\begin{prop}
The eigenvalues of the matrix $R_n$ are 0 and 
$2i\cos(j\theta)$
for $j=1,\dots,n-1$ 
where $\theta:=\pi/n$. 
\end{prop}

\begin{proof}
It is clear that the vector $(1,1,\dots,1)^T$ is an eigenvector with 0 as corresponding eigenvalue. 

From the reduction proposition, we have
$$
S_n^{-1}R_n S_n =
K_n+e_n e_{n-1}^T.
$$
Note that
$$
K_n+e_n e_{n-1}^T =
\begin{pmatrix}
K_{n-1} & e_{n-1}\\
0       & 0
\end{pmatrix}.
$$
Also note that if $K_{n-1}u=\lambda u$ then
$$
\begin{pmatrix}
K_{n-1} & e_{n-1}\\
0       & 0
\end{pmatrix}
\begin{pmatrix}
u \\
0
\end{pmatrix}
= K_{n-1}u = \lambda u = \lambda
\begin{pmatrix}
u \\
0
\end{pmatrix}.
$$ 
\end{proof}

\Remark The eigenvectors of $R_n$ are determined in the proof of the proposition. 

%\newpage
%\input{100.acknowledge.tex}

\section*{Acknowledgements} 

This paper was written in October of 2010 while Behn was visiting the mathematics department at Iowa State university (under grant number FONDECYT 1100135). 

We thank Wayne Barrett (Brigham Young University) for his careful reading of this paper and his constructive comments about it. 

Driessel thanks Wolfgang Kliemann, chair of the Mathematics Department at Iowa State, for arranging his affiliation with that department. 

%\newpage
%\input{300.references.tex}

\section*{References}

\begin{itemize}

\item
Drew, J.H.; Johnson, C.R.; Olesky, D.D.; and van den Driesche, P. (2000)
Spectrally arbitrary patterns, 
\emph{Linear Algebra and Appl.}
308, 121-137
\smallskip

\item
Elsner, L.; Olesky, D.D.; and van den Driesche, P. (2003)
Low rank perturbations and the spectrum of a tri-diagonal sign pattern,
\emph{Linear Algebra and Appl.}
308, 121-137
\smallskip

\item
Kailath, T.; Kung, S.Y.; and Morf, M. (1979a)
Displacement ranks of matrices and linear equations,
\emph{J. Math. Anal. Appl.}
68, 395-407
\smallskip

\item
Kailath, T.; Kung, S.Y.; and Morf, M. (1979b)
Displacement ranks of a matrix,
\emph{Bull. Amer. Math. Soc.}
1, 769-773
\smallskip

\item
Kailath, T. and Sayed, A.H. (1999)
\emph{Fast Reliable Algorithms for Matrices with Structure},
SIAM
\smallskip

\item
Rayleigh, J.W.S. (1894) 
\emph {The Theory of Sound}, Macmillan (Reprinted by Dover in 1945.)
\smallskip

\item
Trench, W.F. (2004)
Characterization and properties of matrices with generalized symmetry or skew symmetry,
\emph{Lin. Alg. Appl.}
377, 207-218
\smallskip

\item
Weaver, J.R. (1985)
Centrosymmetric (Cross-Symmetric) Matrices,
\emph{American Mathematical Monthly}
92, 711-717
\smallskip

\end{itemize}

\end{document}